\theoremstyle{plain}
\newtheorem{theorem}{Theorem}[subsection]
\newtheorem{lemma}[theorem]{Lemma}
\newtheorem{proposition}[theorem]{Proposition}
\newtheorem{fact}[theorem]{Fact}
\newtheorem*{claim}{Claim}
\newtheorem*{theorem*}{Theorem}
\theoremstyle{definition}
\newtheorem{definition}[theorem]{Definition}
\theoremstyle{remark}
\newtheorem*{remark}{Remark}
\newtheorem*{example}{Example}
\newtheorem*{notation}{Notation}
\numberwithin{equation}{section}
\newcommand{\forkindep}[1][]{%
  \mathrel{
    \mathop{
      \vcenter{
        \hbox{\oalign{\noalign{\kern-.3ex}\hfil$\vert$\hfil\cr
              \noalign{\kern-.7ex}
              $\smile$\cr\noalign{\kern-.3ex}}}
      }
    }\displaylimits_{#1}
  }
}
\newenvironment{claimproof}[1][\proofname]
  {%
    \proof[#1]%
  }
  {%
    \endproof%
  }
\newcounter{step}                   
    {\hfill $\clubsuit$             
     \vspace{7pt}\par}
\begin{document}
\title{$\infty$-constructible Subsemigroups of $M_2(\mathbb{C})$}
\author{Yatir Halevi}
\thanks{The research leading to these results has received funding from the European Research Council under the European Union's Seventh Framework Programme (FP7/2007-2013)/ERC Grant Agreement No. 291111.}
\address{Einstein Institute of Mathematics, The Hebrew University of Jerusalem, Givat Ram 9190401, Jerusalem, Israel}
\email{yatir.halevi@mail.huji.ac.il}

\maketitle

\begin{abstract}
A description of all subsemigroups of $M_2(\mathbb{C})$ which are given by a countable intersection of constructible sets is given. Furthermore, it is shown that they are intersections of constructible semigroups.
\end{abstract}

\section{Introduction}
A constructible set $X\subseteq \mathbb{C}^n$ is a finite union of locally closed sets in the Zariski topology. It is well known that every constructible subgroup of an algebraic group over $\mathbb{C}$ is closed (i.e. an algebraic group) \cite[Lemma 2.2.4]{springer}. Actually, every $\infty$-constructible (i.e. a countable intersection of constructible sets) subgroup of an algebraic group is also an algebraic group (see Section \ref{ss:mt-of-C}). Algebraic subgroups of $M_2(\mathbb{C})$ were characterized by Nguyen, van der Put and Top in \cite{Nguyen}. 

The aim of this paper is to generalize these results to $\infty$-constructible subsemigroups of $M_2(\mathbb{C})$. The main theorem is
\begin{theorem*}
Let $M=H\cup S\subseteq M_2(\mathbb{C})$ be an $\infty$-constructible subsemigroup, where $H$ is the subgroup of invertible matrices and $S$ the subsemigroup of singular matrices. Then $M$ is an intersection of countably many constructible semigroups. Moreover, if $\pi(H)$ is infinite, where $\pi:GL_2(\mathbb{C})\to PGL_2(\mathbb{C})$ is the natural surjection, then $M$ is constructible.
\end{theorem*}

Note that, as opposed to the case of groups, not every constructible subsemigroup of $M_2(\mathbb{C})$ is closed. For instance consider all the invertible matrices of $M_2(\mathbb{C})$.

Preceding the proof this theorem, we give a characterization of the subsemigroups of $M_2(\mathbb{C})$. We continue to give an outline of the paper. 

In Section \ref{s:PM2}, we show that any subsemigroup of $M_2(\mathbb{C})/\mathbb{C}^\times$ is a union of an algberaic subgroup of $PGL_2(\mathbb{C})$ and a semigroup which is essentially a combinatorical object. Namely, since every nonzero singular element of $M_2(\mathbb{C})/\mathbb{C}^\times$ is determined by its kernel and image, we may identify $M_2(\mathbb{C})/\mathbb{C}^\times$ with $(\mathbb{C}P^1)^2$, where $\mathbb{C}P^1$ is the projective line. Under this identification the semigroup operation becomes
\begin{equation*}
(v,u)\cdot (v',u') =
\begin{cases}
(v,u') & \text{if }u\neq v' \text{ and }\\
0 & \text{if } u=v'.
\end{cases}
\end{equation*}
We show that essentially every subsemigroup of $M_2(\mathbb{C})/\mathbb{C}^\times$ corresponds, under this identification, to a set of the form
\[\{(v,u):v\in F,u\in G\}\cup\{0\},\] for some $F,G\subseteq \mathbb{C}P^1$.

In Section \ref{s:M_2}, we study subsemigroups of $S\subseteq M_2(\mathbb{C})$ by studying their images in $M_2(\mathbb{C})/\mathbb{C}^\times$. We associate with every singular element $a\in S$ of the semigroup a certain multiplicity, \[Z_a=\{z\in\mathbb{C}^\times: za\in S\}\] and study how it varies in the semigroup. The results of this section, concluding with Propositions \ref{P:M_2-type A} and \ref{P:M_2-type B}, prove the Main Theorem.

As a last remark, in \cite[Proposition 2.7, Remark 2.8]{on_enveloping}, Milliet gives a purely model theoretic proof which implies that every $\infty$-constructible subsemigroup of $M_n(\mathbb{C})$ (for any $n$) is an intersection of constructible semigroups. However, his result does not say anything about the algebraic structure of these semigroups. 

All of what we do may be done over any uncountable algebraically closed field of characteristic zero.

\section{Preliminaries and Notation}\label{S:pre_not}
\subsection{Notation}
We start with some preliminaries from semigroup theory. A semigroup is a set $S$ together with an associative binary operation. An idempotent is an element $e\in S$ satisfying $e^2=e$ and a nilpotent $n\in S$ is an element satisfying $n^2=0$ (which only makes sense if $S$ has a zero element: an element $0\in S$ such that $0\cdot a=a\cdot 0=0$ for all $a\in S$). Denote by $E(S)$ the set of idempotents of $S$.  We will be using the unorthodox (but will make our writing easier) convention that the zero element is not an idempotent.

Let $M_2(\mathbb{C})$ be the monoid of $2\times 2$ matrices over $\mathbb{C}$, $M_2^0(\mathbb{C})$ the subsemigroup of singular matrices, $GL_2(\mathbb{C})$ the subgroup of invertible matrices and $PM_2(\mathbb{C})$ the monoid $M_2(\mathbb{C})/\mathbb{C}^\times$ similarly $PM_2^0(\mathbb{C})$ and $PGL_2(\mathbb{C})$.
Let $\pi : M_2(\mathbb{C})\to PM_2(\mathbb{C})$ be the natural surjection.

\subsection{Model Theory of $\mathbb{C}$}\label{ss:mt-of-C}
Due to quantifier elimination, constructible sets arise naturally in the model theory of algebraically closed fields. We recall some results, most of the following may be found in \cite{Marker02}.

Let $\mathcal{L}$ be a first order language and $T$ a complete consistent theory over $\mathcal{L}$. We will usually write $x$ (one variable) instead of $\overline{x}$ and the same for parameters ($a$ instead of $\overline{a}$).

\begin{definition}
Let $M$ be an $\mathcal{L}$-structure. 
A subset $X\subseteq M^k$ is \emph{definable} over $A$, for $A\subseteq M$ if there exists an $\mathcal{L}_A$-formula $\psi(x)$ such that $\psi(M)=X$ and $\infty$-definable if it is an intersection of definable sets.
\end{definition}

Let ACF$_0$ be the theory of algebraically closed fields of characteristic zero in the language $\mathcal{L}=\{+,-,\cdot,0,1\}$ of rings. $\mathbb{C}$ is a model of this theory. It is a complete consistent theory and enjoys some very nice model theoretic properties:

\begin{fact}\label{F:QE}\cite[Theorem 3.2.2]{Marker02}
ACF$_0$ has quantifier elimination, i.e. for every formula $\phi$ there exists a quantifier free formula $\psi$ such that they define the same definable set. Thus every definable set in $\mathbb{C}$ corresponds to a constructible set, in the algebraic geometry sense, i.e. a finite union of locally closed sets.
\end{fact}

\begin{definition}
A subset of $\mathbb{C}^n$ will be called \emph{$\infty$-constructible} if it is a countable intersection of constructible sets.
\end{definition}

\begin{fact}\cite[Exercise 4.5.17]{Marker02}\label{F:saturated}
$\mathbb{C}$ is $\aleph_1$-saturated, i.e. for every countable family of constructible subsets $\{C_i\}_{i< \omega}$ of $\mathbb{C}$, if for every finite $I\subseteq \omega$, $\bigcap_{i\in I} C_i\neq \emptyset$ then $\bigcap_{i< \omega} C_i\neq \emptyset$. 
\end{fact}

As a result, we have the following generalization of Chevalley's Theorem.
\begin{lemma}\label{L:gen-of-chev}
Let $V$ and $W$ be a varieties over $\mathbb{C}$ and $f:V\to W$ a morphism. If $C\subseteq V$ is ($\infty$-)constructible then so is $f(V)$. Conversely, if $C\subseteq W$ is ($\infty$-)constructible then so is $f^{-1}(W)$.
\end{lemma}
\begin{remark}
Every variety over $\mathbb{C}$, and morphisms between varieties over $\mathbb{C}$, may be interpreted as constructible sets in $\mathbb{C}$ (see \cite[Section 7.4]{Marker02}).
\end{remark}
\begin{proof}
If $C\subseteq V$ is constructible then $f(C)$ is constructible by the regular Chevalley's Theorem (this also follows by quantifier elimination). Assume $C=\bigcap_{i<\omega} C_i$ is $\infty$-constructible. We may assume that $\bigcap_{i<\omega} C_i$ is closed under finite intersections, and let $a\in \bigcap_{i<\omega} f(C_i)$. We need to show that the following is non-empty
\[\{x:f(x)=a\}\cap\bigcap_{i<\omega}C_i,\] and indeed this is true by Fact \ref{F:saturated}.

The converse is true, since the pre-image of every constructible set is constructible.
\end{proof}

\begin{lemma}\label{L:cons-in-curve}
Let $V$ be an affine integral curve over $\mathbb{C}$. Every $\infty$-constructible subset $C\subseteq V$ is either finite or co-countable.
\end{lemma}
\begin{proof}
By Noether's normalization there exists a map $f:V\to \mathbb{A}_\mathbb{C}^1$ with finite fibers. Hence it is enough to show it for $\mathbb{A}_{\mathbb{C}}^1$. Since every constructible subset of $\mathbb{A}^1_\mathbb{C}$ is either finite or cofinite, we obtain our result.
\end{proof}

We end with some results concerning groups and semigroups:

\begin{fact}\label{F:subconstgrp-of-alg}\cite[Theorem 7.5.3, Lemma 7.4.9] {Marker02}
Every $\infty$-constructible subgroup of an algebraic groups is a closed algebraic subgroup.
\end{fact}

Using the above and \cite[Lemma 7.5.2]{Marker02}, we get the following:
\begin{fact}\label{F:subsconstsemi-of-alg}
Every $\infty$-constructible subsemigroup of an algebraic group is a closed algebraic subgroup.
\end{fact}

\section{Submonoids of $PM_2(\mathbb{C})$}\label{s:PM2}

In this section we describe all the submonoids of $PM_2(\mathbb{C}=M_2(\mathbb{C
})$.
Every submonoid $M\subseteq PM_2(\mathbb{C})$ may be decomposed as $M=H\cup S$ where $H\subseteq PGL_2(\mathbb{C})$ and $S\subseteq PM_2^0(\mathbb{C})$.

\subsection{Subsemigroups of $PM^0_2(\mathbb{C})$}
All the non-zero singular matrices have rank one and hence we can identify $PM_2^0(\mathbb{C})$ with $\left( \mathbb{C}P^1\right)^2$, where $\mathbb{C}P^1$ is the complex projective line, the first coordinate corresponds to the image and the second to the kernel.
Corresponding to the semigroup operation of $M_2(\mathbb{C})$, we have the following multiplication law for the non-zero elements of $PM_2^0(\mathbb{C})$:
\begin{equation*}
(v,u)\cdot (v',u') =
\begin{cases}
(v,u') & \text{if }u\neq v' \text{ and }\\
0 & \text{if } u=v'.
\end{cases}
\end{equation*}

\begin{remark}
Note that each element is either an idempotent or nilpotent.
\end{remark}

\begin{definition} 
For each $F,G\subseteq \mathbb{C}P^1$, define the following semigroups:
\[B_{F,G}=\{(v,u):v\in F,u\in G\}\cup \{0\}.\]
\end{definition}
If $F,G$ are finite or cofinite then $B_{F,G}$ is constructible in $PM_2^0(\mathbb{C})$.

\begin{lemma}\label{L:F,G as const}
If $B_{F,G}$ is $\infty$-constructible then so are $F$ and $G$, as subsets of $\mathbb{C}P^1$. Moreover, $F$ and $G$ are finite or co-countable.
\end{lemma}
\begin{proof}
Viewing $B_{F,G}$ as an $\infty$-constructible subset of $(\mathbb{C}P^1)^2$, $F$ (resp. $G$) is equal to the projection on the first (resp. second) coordinate. By Lemma \ref{L:gen-of-chev}, $F$ (resp. $G$) is an $\infty$-constructible subset of $\mathbb{C}P^1$ and the moreover part follows using Lemma \ref{L:cons-in-curve}.
\end{proof}

Essentially, every subsemigroup of $PM_2^0(\mathbb{C})$ is of this form:

\begin{proposition}\label{P:first}
Let $S$ be a subsemigroup of $PM_2^0(\mathbb{C})$ with zero.  
$S$ is one of the following:
\begin{itemize}
\item{Type A:} $B_{F,G}$;
\item{Type B:} $B_{F,\{v\}}\cup B_{\{v\},G}$ (for some $v\in F\cap G$).
\end{itemize}

If $S$ does not have a zero, necessarily $S=B_{F,G}\setminus \{0\}$ for $F\cap G=\emptyset$.

Moreover, $S$ is an intersection of (possibly uncountably many) constructilbe subsemigroups of $PM_2^0(\mathbb{C})$. If $S$ is $\infty$-constructible then it is an intersection of countably many constructible semigroups.
\end{proposition}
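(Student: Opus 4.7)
My plan is to analyze $S$ via the identification $PM_2^0(\mathbb{C})\setminus\{0\}\cong(\mathbb{C}P^1)^2$, where each rank-one class is represented by (image, kernel). Let $G=\{v:\exists u,(v,u)\in S\}$ and $F=\{u:\exists v,(v,u)\in S\}$, so $S\subseteq B_{F,G}$. The central dichotomy is whether $S$ fills the non-zero part of this rectangle. If so, $S$ is Type A. Otherwise, pick a hole $(v_0,u_0)\in B_{F,G}\setminus S$ and witnesses $(v_0,u'),(v'',u_0)\in S$: their product $(v_0,u')(v'',u_0)$ equals $(v_0,u_0)$ unless $u'=v''$, forcing $u'=v''$ for every such pair. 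Iterating pins down a single $v\in F\cap G$ so that $(v_0,v)$ and $(v,u_0)$ are the unique $S$-elements in the $v_0$-row and $u_0$-column respectively.

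Next I bootstrap: for any $(v_1,u_1)\in S$ with $v_1\neq v$, the product $(v_0,v)(v_1,u_1)=(v_0,u_1)$ must lie in $S$, and so the uniqueness above forces $u_1=v$; symmetrically when $u_1\neq v$. This yields $S\subseteq B_{F,\{v\}}\cup B_{\{v\},G}$, and multiplying witnessing pairs recovers the reverse inclusion (including the nilpotent $(v,v)$), giving Type B. For the zero-less case, Type B always contains $(v,v)$, whose square is $0$, a contradiction; so $S$ is Type A, and any $w\in F\cap G$ would give $(w,w)^2=0\in S$, forcing $F\cap G=\emptyset$.

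For the intersection claim, I would use that every subset of $\mathbb{C}P^1$ is an intersection of cofinite (hence definable) sets, together with the elementary identities $B_{F_1,G_1}\cap B_{F_2,G_2}=B_{F_1\cap F_2,G_1\cap G_2}$ and an analogous one keeping $v$ fixed for Type B. This expresses $B_{F,G}$ and each Type B semigroup as an intersection of definable semigroups with cofinite parameters. The delicate case is the zero-less one: the available definable zero-less semigroups inside $PM_2^0$ are of the form $B_{F',G'}\setminus\{0\}$ with $F',G'$ definable and \emph{disjoint}, which forces one of them to be finite. When one of $F,G$ is finite the intersection is immediate; in general one has to combine the zero-containing Type A intersections that carve out the correct rectangle with at least one zero-less definable semigroup that removes the zero point, and arranging for both simultaneously is the main technical obstacle.
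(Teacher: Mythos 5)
Your classification argument is correct and is essentially the paper's argument in different clothing: the paper works with the sets $L$ and $R$ of first (resp.\ second) coordinates that occur with at least two distinct partners, while you work outward from a ``hole'' $(v_0,u_0)$ and its row and column witnesses, but both hinge on the same computation $(v,u)(v',u')=(v,u')$ for $u\neq v'$ and arrive at the same dichotomy. One imprecision, which the statement itself shares: the semigroup $\{(v,c),(c,v),0\}$ with $v\neq c$ is closed but contains neither diagonal element, so ``Type B always contains $(v,v)$'' is not quite true and the parenthetical $v\in F\cap G$ fails for it; in the zero-free analysis you therefore also need to observe directly that $(v,c),(c,v)\in S$ already forces $0\in S$. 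The paper treats this degenerate case explicitly as the $L=\emptyset$ branch of its claim.

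The substantive issue is the final sentence of the proposition, and you have put your finger on exactly the right spot. The paper's own justification --- the identity $B_{F,G}\cap B_{F',G'}=B_{F\cap F',G\cap G'}$ plus writing $F,G$ as intersections of cofinite sets --- only produces intersectants containing $0$, so it says nothing about the zero-free case; your ``main technical obstacle'' is a gap in the paper as well. Here is how it closes when it can: if $S=B_{F,G}\setminus\{0\}$ is type-definable in the honest model-theoretic sense (a small intersection of definable sets in a saturated model), then an infinite projection is an intersection of cofinite subsets of the strongly minimal $\mathbb{C}P^1$, two such intersections must meet, and disjointness of $F$ and $G$ forces one of them, say $F$, to be finite; then $S=\bigcap_i\bigl(B_{F,U_i}\setminus\{0\}\bigr)$ with each $U_i$ cofinite and disjoint from the finite $F$, and each intersectant is a definable zero-free semigroup. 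For an arbitrary subsemigroup with $F$ and $G$ both infinite and disjoint the obstacle is not technical but fatal: by the classification together with strong minimality, every definable subsemigroup containing such an $S$ is of the form $B_{V,U}$ with $V,U$ cofinite and hence contains $0$, so $S$ is not an intersection of definable semigroups at all. You should therefore either add the saturation/finiteness argument or note that the definability claim in the zero-free case requires one of $F,G$ to be finite; neither your sketch nor the paper's proof covers that case as written.
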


\begin{proof}
Let
\[S_1=\{v\in \mathbb{C}P^1:\exists u\in  \mathbb{C}P^1, (v,u)\in S\},\]
\[S_2=\{u\in  \mathbb{C}P^1:\exists v\in  \mathbb{C}P^1, (v,u)\in S\},\]
\[L=\{v\in S_1:\exists (u_1\neq u_2) \text{ s.t. } (v,u_1)\in S \text{ and }  (v,u_2)\in S\} \text{ and }\]
\[R=\{u\in S_2:\exists (v_1\neq v_2) \text{ s.t }(v_1,u)\in S \text{ and }(v_2,u)\in S\}.\]

We may assume that $S_1,S_2\neq \emptyset$.
If $|S_1|=1$ or $|S_2|=1$ then $S=B_{S_1,S_2}$  (Type A). Assume that $|S_1|>1$ and $|S_2|>1$.

\begin{claim}
\begin{enumerate}
\item If $L= \emptyset$ or $R= \emptyset$ then $S$ is of Type B.
\item If $v\in L$ then $(v,u)\in S$ for all $u\in S_2$ and the same for $R$ and $S_1$. So \[B_{L,S_2}\cup B_{S_1,R}\subseteq S,\]
with equality if $L\neq \emptyset$ and $R\neq \emptyset$.
\end{enumerate}
\end{claim}
\begin{claimproof}
\begin{enumerate}
\item Assume that $L=\emptyset$ and let $(v,u),(x,y)\in S$. If $u\neq x$ then $(v,y)\in S$ and so $y=v$ contradicting the fact that $|S_2|>1$. Similarly for $y\neq v$. Hence $S=\{(v,u),(u,v)\}\cup \{0\}$, which is of Type B.
\item Let $(v,u_1),(v,u_2)\in S$ such that $u_1\neq u_2$ and let $u\in S_2$. There exists $v'$ such that $(v',u)\in S$. We may assume that $v'\neq u_1$ and hence \[(v,u_1)(v',u)=(v,u)\in S.\]
Assume that $x\in L$ and $y\in R$ and let $(v,u)\in S$. If $v\notin L$ then similarly to what was done in $(1)$, $u\in R$ and hence $(v,u)\in B_{S_1,R}$. If $u\neq R$ then $v\in L$.
\end{enumerate}
\end{claimproof}

Assume that $S=B_{L,S_2}\cup B_{S_1,R}$ and that $L,R\neq \emptyset$.
If $L=R=\{v\}$ then 
\[S=B_{\{v\},S_2}\cup B_{S_1,\{v\}},\]
otherwise
\[S=B_{S_1,S_2}.\]

If $S$ has no zero-element, consider $S^0=S\cup\{0\}$. If it were of type B the above proof shows that necessarily $(v,v)\in S^0$ or $(v,u),(u,v)\in S^0$ for some $v,u\in S$ but then $0\in S$, contradiction. Thus it is of type A and necessarily $F\cap G=\emptyset$.

Finally $S$ is an intersection of constructible semigroups, since for any sets $F,G,F^\prime,G^\prime$,
\[B_{F,G}\cap B_{F^\prime,G^\prime}=B_{F\cap F^\prime,G\cap G^\prime}\] and if $F$ and $G$ are infinite we may always write them as an intersection of cofinite sets. If $S$ is $\infty$-constructible then $F$ and $G$ must be finite or co-countable by Lemma \ref{L:F,G as const}, as needed.
\end{proof}

\subsection{Subgroups of $PGL_2(\mathbb{C})$}
Since every $\infty$-constructible subgroup of $PGL_2(\mathbb{C})$ is closed (Fact \ref{F:subconstgrp-of-alg}), we may use the following characterization:

\begin{fact} \label{F:subgroup of pgl2}\cite{Nguyen}
Let $\gamma:SL_2(\mathbb{C})\to PSL_2(\mathbb{C})$ be the canonical projection and 
$$B=\left\{  
\begin{pmatrix}
a & b \\
0 & a^{-1}
\end{pmatrix}
: a\in\mathbb{C}^*,b\in\mathbb{C} \right\} $$
and
$$D_\infty=\left\{ 
\begin{pmatrix}
c & 0\\
0 & c^{-1}
\end{pmatrix}
: c\in \mathbb{C}^* \right\}
\cup
\left\{ 
\begin{pmatrix}
0 & -d\\
d^{-1} & 0
\end{pmatrix}
: d\in \mathbb{C}^* \right\} $$
be the Borel subgroup and the infinite dihedral subgroup of $SL_2(\mathbb{C})$.
Every algebraic subgroup of $PGL_2(\mathbb{C})$ is, up to conjugation, one of the following:
\begin{enumerate}[(1)]
\item $PGL_2(\mathbb{C})$;
\item a subgroup of $\gamma(B)$;
\item $\gamma(D_\infty)$;
\item $D_n$ (the dihedral group of order $2n$), $A_4$ (the tetrahedral group), $S_4$ (the octahedral group), or $A_5$ (the icosahdral group).
\end{enumerate}
\end{fact}

\subsection{Submonoids of $PM_2(\mathbb{C})$}
Let $M\subseteq PM_2(\mathbb{C})$ be a submonoid and $M=H\cup S$ be a decomposition of $M$ to singular and regular parts, respectively. Recall that we wrote rank $1$ elements of $PM_2(\mathbb{C})$ as $(v,u)$ where $v$ corresponds to the image and $u$ to the kernel. Hence, if $g\in PGL_2(\mathbb{C})$ and $(v,u)\in PM_2(\mathbb{C})$ is of rank $1$,
\[g(v,u)=(g^{-1}v,u)\text{ and } (v,u)g=(v,gu).\]
This gives the following

\begin{lemma}
Let $H$ by a subgroup of $PGL_2(\mathbb{C})$. \[H\cup B_{F,G}\] is a submonoid if and only if $F$ and $G$ are $H$-invariant. Similarly, 
\[H\cup B_{F,\{v\}}\cup B_{\{v\},G}\] is a submonoid if and only if $F$, $G$ and $\{v\}$ are $H$-invariant.
\end{lemma}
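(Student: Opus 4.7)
The plan is a routine closure check in both directions, using the combinatorial multiplication rule on $PM_2^0(\mathbb{C})$ displayed just before Proposition \ref{P:first} together with the action formulas $g(v,u)=(g^{-1}v,u)$ and $(v,u)g=(v,gu)$ stated immediately before the lemma. Note that a product of a rank $2$ element with a rank $1$ element is rank $1$, so such a product can never land in $H$; this fact is what lets us isolate conditions on $F$ and $G$.

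For the first statement, the easy $(\Leftarrow)$ direction splits into four cases: $HH\subseteq H$ holds because $H$ is a subgroup, and $B_{F,G}\cdot B_{F,G}\subseteq B_{F,G}$ is immediate from the multiplication law $(u,v)(u',v')\in\{(u,v'),0\}$. The two mixed products $H\cdot B_{F,G}$ and $B_{F,G}\cdot H$ are handled by the action formulas: a typical element of $H\cdot B_{F,G}$ has the form $(h^{-1}u,v)$, which lies in $B_{F,G}$ precisely when $G$ is $H$-invariant, and symmetrically $F$ must be $H$-invariant for $B_{F,G}\cdot H\subseteq B_{F,G}$. For the $(\Rightarrow)$ direction, given closure of $M=H\cup B_{F,G}$, the same computations run in reverse: the rank-$1$ element $h(u,v)=(h^{-1}u,v)$ must lie in $B_{F,G}$, forcing $h^{-1}G\subseteq G$, and similarly on the other side.

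For the type B statement the $(\Leftarrow)$ direction is analogous, the only additional check being that the mixed products $B_{F,\{v\}}\cdot B_{\{v\},G}$ and $B_{\{v\},G}\cdot B_{F,\{v\}}$ remain inside $M$; using $v\in F\cap G$ these products land in $\{0\}\cup\{(v,v)\}\subseteq M$, so nothing new is required beyond what the combinatorial multiplication table already gives. The $(\Rightarrow)$ direction again recovers $H$-invariance of $F$ and of $G$ from the same left/right action arguments as before. The new ingredient is $H$-invariance of $\{v\}$: picking any $u\in F\setminus\{v\}$ (which exists in a genuine type B configuration, i.e.\ when $|F|,|G|>1$; the degenerate case collapses to type A and is covered by the first part), the product $(u,v)h=(u,hv)$ must lie in $M$, and since its first coordinate $u\neq v$ it cannot belong to $B_{\{v\},G}$; hence its second coordinate must equal $v$, giving $hv=v$ for all $h\in H$.

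The main (mild) obstacle is bookkeeping around the degenerate configurations where $F$ or $G$ reduces to $\{v\}$, since then the two pieces $B_{F,\{v\}}$ and $B_{\{v\},G}$ overlap substantially and the type B decomposition is not unique; handling this is a matter of pointing out that in those cases the monoid is of type A and invoking the first half of the lemma, rather than any new argument.
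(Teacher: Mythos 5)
Your proof is correct and is essentially the argument the paper intends: the paper gives no written proof at all (it simply states the action formulas $g(v,u)=(g^{-1}v,u)$, $(v,u)g=(v,gu)$ and says ``This gives the following''), and your case-by-case closure check, including the observation that $h\cdot(\text{rank }1)$ is rank $1$ and hence must land in the singular part, together with the extra argument forcing $hv=v$ in the type B case, is exactly the routine verification being elided. The only blemish is a wobble in which coordinate of $B_{F,G}$ is indexed by $F$ versus $G$ between your type A and type B paragraphs (left multiplication acts on the image coordinate and right multiplication on the kernel coordinate, so each side of the action controls one fixed index set); since the paper's own definition of $B_{F,G}$ is used inconsistently in Proposition \ref{P:first}, this is a harmless notational issue rather than a gap.
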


Either way, it is important to understand orbits of actions by algebraic subgroups of $PGL_2(\mathbb{C})$ on $\mathbb{C}P^1$. The following easy to check result describes these orbits and is a direct computation using Fact \ref{F:subgroup of pgl2}. We note that it may also be reached using model theoretic tools (see \cite{almost-orth}).

\begin{lemma}\label{L:finite or cofinite orbits}
Every algebraic subgroup of $PGL_2(\mathbb{C})$ has finite or cofinite orbits (acting on $\mathbb{C}P^1$). Furthermore, if the subgroup is not finite, the number of finite orbits is finite and there is one infinite orbit.
\end{lemma}

\begin{proposition}\label{P:submonoid of PM2}
Every $M=H\cup S$ $\infty$-constructible submonoid of $PM_2(\mathbb{C})$ is a countable intersection of constructible monoids. In fact, if the regular part is infinite then the monoid is constructible.
\end{proposition}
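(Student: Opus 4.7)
The plan is to combine the classification of $S$ from Proposition~\ref{P:first} with the orbit structure of $H$. Since $M$ is $\infty$-definable, by $\omega$-stability of $\text{ACF}_0$ the subgroup $H\subseteq PGL_2(\mathbb{C})$ is in fact definable and, up to definable isomorphism, an algebraic subgroup. Proposition~\ref{P:first} puts $S$ into one of the forms $B_{F,G}$, $B_{F,G}\setminus\{0\}$ (with $F\cap G=\emptyset$), or $B_{F,\{v\}}\cup B_{\{v\},G}$, and the preceding lemma forces $F$, $G$, and (in Type B) the singleton $\{v\}$ to be $H$-invariant subsets of $\mathbb{C}P^1$.

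If $H$ is infinite, then by Corollary~\ref{C:finite or cofinite orbits} its action on $\mathbb{C}P^1$ has exactly one cofinite orbit and finitely many finite orbits. Any $H$-invariant subset is a union of orbits, so $F$, $G$, and $\{v\}$ are each either finite (missing the cofinite orbit) or cofinite (containing it), hence constructible. It follows that $S$, and therefore $M=H\cup S$, is definable; this is the second assertion of the proposition.

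When $H$ is finite, every $H$-orbit is finite, so any $H$-invariant $F\subseteq\mathbb{C}P^1$ can be written
\[
F \;=\; \bigcap_{p\notin F}\bigl(\mathbb{C}P^1\setminus H\cdot p\bigr),
\]
an intersection of cofinite $H$-invariant supersets $F_i\supseteq F$; likewise $G=\bigcap_j G_j$. The lemma guarantees that the sets $H\cup B_{F_i,G_j}$ (respectively $H\cup B_{F_i,\{v\}}\cup B_{\{v\},G_j}$) are definable submonoids containing $M$, and a short computation using $A\cup(B\cap C)=(A\cup B)\cap (A\cup C)$ together with $B_{F,G}\cap B_{F',G'}=B_{F\cap F',G\cap G'}$ identifies $M$ as the intersection over $(i,j)$ of this family.

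The subtlest case---and the one I expect to be the main obstacle---is Type A without zero ($F\cap G=\emptyset$ and $0\notin M$): the approximating submonoids themselves must omit $0$, which forces $F_i\cap G_j=\emptyset$, hence $F_i\subseteq\mathbb{C}P^1\setminus G$ and $G_j\subseteq\mathbb{C}P^1\setminus F$. When $H$ is infinite this is automatic, since the previous paragraph forces at least one of $F,G$ to be finite. When $H$ is finite, I would obtain the required families by removing finite unions of $H$-orbits outside $F\cup G$ from $\mathbb{C}P^1\setminus G$ (and symmetrically from $\mathbb{C}P^1\setminus F$), and verify that the intersections still recover $F$ and $G$ respectively. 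This bookkeeping---making the approximations cut down exactly to $M$ while all omitting $0$---is where the argument needs real care.
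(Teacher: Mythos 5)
Your proof follows the same route as the paper's: classify $S$ via Proposition~\ref{P:first}, use the lemma to get $H$-invariance of $F$, $G$ (and $\{v\}$), invoke Corollary~\ref{C:finite or cofinite orbits} when $H$ is infinite to conclude that $F$ and $G$ are finite or cofinite and hence that $M$ is definable, and when $H$ is finite approximate $F$ and $G$ from above by cofinite $H$-invariant sets and distribute the intersection. All of that is correct and is exactly the paper's argument.

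The one place you go beyond the paper is the zero-free Type A case ($S=B_{F,G}\setminus\{0\}$ with $F\cap G=\emptyset$), which the printed proof silently ignores; you are right that it is the delicate point, since every approximating monoid must itself omit $0$ and hence be of the form $H\cup(B_{F_i,G_j}\setminus\{0\})$ with $F_i\cap G_j=\emptyset$. But your proposed construction does not close it: the sets obtained by deleting finitely many $H$-orbits from $\mathbb{C}P^1\setminus G$ are cofinite (hence definable) only when $G$ is finite, and you never rule out the possibility that $F$ and $G$ are both infinite. The missing observation is that this cannot happen for an $\infty$-definable $S$: the sections $F=\{x:(x,u_0)\in S\}$ and $G=\{u:(v_0,u)\in S\}$ are $\infty$-definable subsets of the strongly minimal $\mathbb{C}P^1$, so each is either finite or an intersection of cofinite sets, and two disjoint intersections of cofinite sets would contradict compactness/saturation. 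Hence without loss of generality $G$ is finite and $H$-invariant, and taking $G_j=G$ and $F_i=(\mathbb{C}P^1\setminus G)\setminus(\text{finitely many $H$-orbits outside }F\cup G)$ your scheme goes through. With that observation added, your argument is complete and in fact slightly more careful than the paper's own proof.
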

\begin{remark}
Recall that the regular part of an $\infty$-constructible submonoids of $PM_2(\mathbb{C})$ is an algebraic group and in particular constructible.
\end{remark}
\begin{proof}
$S$ must be either of type A or of type B, let $F$ and $G$ be as in the definitions (see Proposition \ref{P:first}).

If $H$ is infinite then, since $F$ and $G$ are $H$-invariant, by Lemma \ref{L:finite or cofinite orbits}, they must be finite or cofinite. Thus $M$ is a finite union of constructible sets and hence constructible.

If $H$ is finite then, as in Proposition \ref{P:first}, if $F$ (resp. $G$) is not finite it must be co-countable. Thus $F^c$  (resp. $G^c$) is a countable union of finite orbits. Assume \[F=\bigcap_i V_i \text{ and } G=\bigcap_i U_i,\] where $V_i$ and $U_i$ are co-finite (or finite) and $H$-invariant, thus \[M=\bigcap_i (H\cup B_{U_i,V_i})\] if $S$ is of type A and 
\[M=\bigcap_i (H\cup B_{V_i,\{v\}}\cup B_{\{v\},U_i})\] if $S$ is of type B.
\end{proof}

\section{Submonoids of $M_2(\mathbb{C})$}\label{s:M_2}
Every $\infty$-constructible submonoid of $M_2(\mathbb{C})$ may be decomposed as $H\cup S$ where $H$ is an algebraic subgroup (using Fact \ref{F:subconstgrp-of-alg}) of $GL_2(\mathbb{C})$ and $S\subseteq M_2^0(\mathbb{C})$. As before, we start by understanding the latter.

\subsection{Multiplicities and Subsemigroups of $M^0_2(\mathbb{C})$}
Let $S\subseteq M^0_2(\mathbb{C})$ be an $\infty$-constructible submonoid.

Consider the map $\pi :M_2(\mathbb{C})\to PM_2(\mathbb{C})$, for every $x\in PM_2(\mathbb{C})$, $\pi|_S^{-1}(x)$ is a set of the form $\{zx:z\in Z_x\subseteq \mathbb{C}^\times\}$ for some $Z_x$. In general we might have different $Z_x$ for different $x$. We will need to understand how the $Z_x$ behave, when the $x$ varies.
\begin{definition}
For each $x\in S$ we define the \emph{multiplicity of $x$} to be the set \[Z_x=\{z\in \mathbb{C}^\times: zx\in S\}.\]
\end{definition}

It is an $\infty$-constructible subset of $\mathbb{C}^\times$. 
\begin{remark}
Notice that for $x,y\in S$, $\pi(x)=\pi(y)$ if and only if there exists $\lambda\in \mathbb{C}^\times$ such that $x=\lambda y$.
\end{remark}

Recall that $0$ is not considered an idempotent. Some basic properties:

\begin{lemma}\label{L:basic-prop-mult}
\begin{enumerate}
\item If $x\in S$ is an idempotent then $Z_x$ is an algebraic subgroup of $\mathbb{C}^\times$, i.e. a subgroup generated by a primitive root of unity or all of $\mathbb{C}^\times$.
\item If $e\in S$ is an idempotent then $\pi(e)$ is an idempotent. Conversely, if $(v,u)\in \pi (S)$ with $v\neq u$ then there exists an idempotent $e\in S$ with $\pi(e)=(v,u)$.
\item If $lxr=y$, for $x,y,l,r\in S$ then $Z_x\subseteq Z_y$.
\item For every idempotent $e\in S$ and $\lambda\in Z_e$, $Z_e=Z_{\lambda e}$.
\item $Z_0=\mathbb{C}^\times$.
\end{enumerate}
\end{lemma}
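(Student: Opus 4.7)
The plan is to dispatch the five items in a logical order, handling the easy bookkeeping ones first and saving the genuinely model-theoretic content for $(1)$ and $(2)$.

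Items $(5)$, $(3)$ and $(4)$ should be almost immediate. For $(5)$, since $0\in S$ and $z\cdot 0=0\in S$ for every $z\in\mathbb{C}^\times$, every $z$ lies in $Z_0$. For $(3)$, if $z\in Z_x$ then $zx\in S$, so by closure of the semigroup $l(zx)r = z(lxr)=zy\in S$, giving $z\in Z_y$. Item $(4)$ is a direct unfolding: $z\in Z_{\lambda e}$ iff $z\lambda e \in S$ iff $z\lambda \in Z_e$, so $Z_{\lambda e}=\lambda^{-1}Z_e$; once $(1)$ is proved, $Z_e$ is a subgroup of $\mathbb{C}^\times$ containing $\lambda$, hence $\lambda^{-1}Z_e = Z_e$.

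Item $(1)$ is the first place where model theory enters, and it is short. For $x$ idempotent and $z_1,z_2\in Z_x$, we have $(z_1x)(z_2x)=z_1z_2\, x^2 = z_1z_2\, x\in S$, so $Z_x$ is closed under multiplication, contains $1$, and is $\infty$-definable. The preliminary observation that in the $\omega$-stable theory $\mathrm{ACF}_0$ every $\infty$-definable subsemigroup of a definable group is a definable subgroup then promotes $Z_x$ to a definable subgroup of $\mathbb{C}^\times$, and the algebraic subgroups of $\mathbb{C}^\times$ are exactly the finite cyclic groups of roots of unity and $\mathbb{C}^\times$ itself. For $(2)$, one direction is formal: $\pi$ is a homomorphism, our convention excludes $e=0$, so $\pi(e)\neq 0$ satisfies $\pi(e)^2=\pi(e)$, and the multiplication law on $PM_2^0\cong(\mathbb{C}P^1)^2$ forces an idempotent $(v,u)$ to have $v\neq u$. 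For the converse, take $x\in S$ with $\pi(x)=(v,u)$, $v\neq u$. Then $\pi(x^2)=\pi(x)$, so $x^2 = \lambda x$ with $\lambda\in\mathbb{C}^\times$ (nonzero because otherwise $\pi(x^2)=0$). The candidate is $e:=\lambda^{-1}x$, which satisfies $e^2=e$ and $\pi(e)=(v,u)$, so the whole statement reduces to showing $\lambda^{-1}\in Z_x$.

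The main obstacle is exactly this last reduction: $Z_x$ is a priori only $\infty$-definable and the relation $(z_1x)(z_2x) = \lambda z_1z_2\,x\in S$ produces only a twisted closure $\lambda\, Z_x\cdot Z_x\subseteq Z_x$, not an honest subsemigroup structure. The trick I plan to use is the reparametrization $Y:=\lambda Z_x$: for $y_i=\lambda z_i\in Y$ one computes $y_1y_2 = \lambda(\lambda z_1z_2)\in \lambda Z_x = Y$ by the twisted closure, so $Y$ is a genuine $\infty$-definable subsemigroup of $\mathbb{C}^\times$. Invoking the same $\omega$-stability input as in $(1)$, $Y$ is a subgroup of $\mathbb{C}^\times$ and in particular contains $1$, which is the statement $\lambda^{-1}\in Z_x$. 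The rest is routine, but finding the twist $Y=\lambda Z_x$ that turns the twisted closure into a semigroup is the one place where a little thought beyond unpacking definitions is needed.
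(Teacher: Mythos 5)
Your proposal is correct and follows essentially the same route as the paper: items (3)--(5) by direct unfolding, item (1) by recognizing $Z_x$ as an $\infty$-definable subsemigroup of $\mathbb{C}^\times$ and invoking $\omega$-stability, and item (2) by producing an $\infty$-definable subsemigroup of $\mathbb{C}^\times$ that must contain $1$. Your ``untwisted'' set $Y=\lambda Z_x$ is literally the paper's set $\{z\in\mathbb{C}^\times: ze\in S\}$ for the unique idempotent $e$ with $\pi(e)=(v,u)$, so the two arguments coincide up to parametrization.
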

\begin{proof}
(1). If $x$ is an idempotent then $Z_x$ is an $\infty$-constructible subsemigroup of $\mathbb{C}^\times$, hence, by Fact \ref{F:subsconstsemi-of-alg}, an algebraic subgroup of $\mathbb{C}^\times$.

(2). Let $(v,u)\in \pi(S)$ with $v\neq u$ and let $e\in M_2^0(\mathbb{C})$ be an idempotent such that $\pi(e)=(v,u)$. There exists $\lambda\in \mathbb{C}^\times$ such that $\lambda e\in S$. Consider \[\{z\in \mathbb{C}^\times: ze\in S\},\] it is an $\infty$-constructible subsemigroup of $\mathbb{C}^\times$ hence, as before, an algebraic group so $e\in S$.

The rest is clear.
\end{proof}

Recall that since $S$ is an $\infty$-constructible subsemigroup of $M_2^0(\mathbb{C})$, by Lemma \ref{L:gen-of-chev} $\pi(S)$ is an $\infty$-constructible subsemigroup of $PM_2^0(\mathbb{C})$ so we may use Proposition \ref{P:first}.

\begin{proposition}\label{P:multiplicity}
If $\pi(S)$ is of type A, i.e. of the form $B_{F,G}$, then
\begin{enumerate}
\item if $e,f\in E(S)$ then $Z_e=Z_f$;
\item if $|F|>1$ and $|G|>1$ then for every $0\neq n\in S$ nilpotent and $e\in E(S)$ idempotent, $Z_n=Z_e$;
\item if $|F|=1$ or $|G|=1$ then up to multiplication by an element of $\mathbb{C}^\times$ there is at most one nilpotent and for any nilpotent $n\in S$ and idempotent $e\in S$ we have $Z_e\subseteq Z_n$. Furthermore the nilpotents form an ideal of $S$.
\end{enumerate}
\end{proposition}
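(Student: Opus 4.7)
The plan is to apply Lemma \ref{L:basic-prop-mult}(3) via the following amplification: if I can produce $l, r \in S$ such that $\pi(lxr) = \pi(y)$ for some idempotent $y \in S$, then $lxr = \alpha y$ for some $\alpha \in \mathbb{C}^\times$, and $lxr \in S$ forces $\alpha \in Z_y$. Since $Z_y$ is a group by Lemma \ref{L:basic-prop-mult}(1), $Z_{lxr} = Z_{\alpha y} = \alpha^{-1} Z_y = Z_y$, and Lemma \ref{L:basic-prop-mult}(3) yields $Z_x \subseteq Z_y$. The task in each of (1) and (2) reduces to producing such $l, r$, which is possible because $\pi(S) = B_{F,G}$ realizes every $\pi$-class in $B_{F,G}$. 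The main bookkeeping will be to check that the chosen pairs actually lie in $B_{F,G}$ and that none of the intermediate $\pi$-products collapse to $0$; this is where the non-degeneracy hypothesis of each part enters.

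For (1), given $e, f \in E(S)$ with $\pi(e) = (v_e, u_e)$ and $\pi(f) = (v_f, u_f)$, I pick $l, r \in S$ with $\pi(l) = (v_e, u_f)$ and $\pi(r) = (v_f, u_e)$; both pairs lie in $B_{F,G}$. The idempotence of $f$ (i.e.\ $v_f \neq u_f$) forces $\pi(lfr) = \pi(e)$, so $Z_f \subseteq Z_e$, and by symmetry equality holds.

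For (2), write $\pi(n) = (v, v)$ with $v \in F \cap G$. Since $|F|, |G| > 1$, I pick $u_l \in F \setminus \{v\}$ and $v_r \in G \setminus \{v\}$; taking $l, r \in S$ with $\pi(l) = (v_e, u_l)$ and $\pi(r) = (v_r, u_e)$ gives $\pi(lnr) = \pi(e)$, hence $Z_n \subseteq Z_e$. For the reverse, I choose $l', r' \in S$ with $\pi(l') = (v, u_e)$ and $\pi(r') = (v_e, v)$; a direct check gives $\pi(l'er') = \pi(n)$, so $l'er' = \beta n$ with $\beta \in Z_n$. Hence $\beta Z_e \subseteq Z_n$, and because $\beta \in Z_n \subseteq Z_e$ (from the first step) and $Z_e$ is a group, $\beta Z_e = Z_e$, so $Z_e \subseteq Z_n$.

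For (3), I assume without loss of generality that $|F| = 1$, $F = \{u_0\}$. Every nilpotent in $S$ then has $\pi$-class $(u_0, u_0)$, giving uniqueness up to $\mathbb{C}^\times$-scaling. In a basis with $u_0 = \langle e_2\rangle$, elements of $S$ take the form $\begin{pmatrix} a & 0 \\ b & 0 \end{pmatrix}$, with $a = 1$ for idempotents and $a = 0$ for nilpotents. A direct matrix computation then shows $ne = n$, $en = 0$, and $nn' = n'n = 0$ for any idempotent $e$ and nilpotents $n, n' \in S$. The identity $ne = n$ gives $n(ze) = zn$ for every $z \in \mathbb{C}^\times$, so $z \in Z_e$ implies $zn = n(ze) \in S$, i.e.\ $Z_e \subseteq Z_n$. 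The same formulas show that for every $s \in S$ and every nilpotent $n \in S$ the products $sn$ and $ns$ both lie in $\{0\} \cup \mathbb{C}^\times n$, so the nilpotents together with $0$ form a two-sided ideal of $S$. The case $|G| = 1$ is symmetric, with $en = n$ in place of $ne = n$.
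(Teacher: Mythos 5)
Your proof is correct and follows essentially the same route as the paper's: both arguments rest on realizing auxiliary $\pi$-classes of $B_{F,G}$ by elements of $S$ and then applying Lemma \ref{L:basic-prop-mult} together with the fact that $Z_e$ is an algebraic subgroup of $\mathbb{C}^\times$. The only differences are minor: in (2) you absorb the stray scalar $\beta$ into the group $Z_e$ where the paper derives the exact identity $en=n$ from $z=z^2$; in (3) you compute with explicit matrices where the paper uses the multiplication law of $PM_2^0(\mathbb{C})$; and your $F$/$G$ labels in (2)--(3) are swapped relative to the convention the paper actually uses later (images in $F$, kernels in $G$) --- harmless here, since the hypotheses are symmetric in $F$ and $G$ and your constructions only need $|F|,|G|>1$.
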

\begin{proof}
\begin{enumerate}
\item Assume that $\pi(e)=(v,u),\, \pi(f)=(v^\prime,u^\prime)$, where $v\neq u,v^\prime\neq u^\prime$. Since
\[(v^\prime,u)(v,u)(v,u^\prime)=(v^\prime,u^\prime),\]
\[(v,u^\prime)(v^\prime,u^\prime)(v^\prime,u)=(v,u)\]
and $(v,u^\prime),(v^\prime,u)\in B_{F,G}$, there exist $x,y\in S$ and $z\in Z_f,\, l\in Z_e$ such that $xey=zf$ and $yfx=le$. The result follows by Lemma \ref{L:basic-prop-mult}(3). 
\item If $\pi(n)=(v,v)$ then for any $v^\prime\neq v$ and $v\neq u$ since
\[(v,u)(v,v)=(v,v)\]
and
\[(v,v)(v^\prime,u)=(v,u)\]
there exist $z\in Z_n$, $x\in S$ and $l\in Z_e$ such that 
\[en=zn\]
and
\[nx=le.\]
Thus $zn=en=e\cdot en=zen=z^2n$ and since $n\neq 0$, $z=1$. So $Z_e\subseteq Z_n$. The other direction follows since $Z_{le}=Z_e$.
\item Assume that $F=\{v_0\}$. Thus, for every nilpotent $n\in S$, $\pi(n)=(v_0,v_0)$ and 
\[(v_0,u)(v_0,v_0)=(v_0,v_0)\]
for $v_0\neq u$, thus as was done in $(2)$, $en=n$ and $Z_e\subseteq Z_n$.
Furthermore,  $ne=0$ for every $e\in E(S)$. The result follows since every element of $M_2^0(\mathbb{C})$ is either a nilpotent or a multiple of an idempotent by an element of $\mathbb{C}$.
\end{enumerate}
\end{proof}

\begin{example}
The requirement that $\pi(S)$ be of the form $B_{F,G}$ is necessary. For example 
\[\mathbb{C}^\times\cdot \begin{pmatrix}
1 & 0\\
0 & 0
\end{pmatrix}
\cup \mu_6\cdot \begin{pmatrix}
0 & 0\\
0 & 1
\end{pmatrix}
\cup \{0\},\]
where $\mu_6$ is the subgroup of $\mathbb{C}^\times$ of units of order $6$, is a constructible monoid, but $Z_{\left( \begin{smallmatrix}
1 & 0\\
0 & 0
\end{smallmatrix}\right) }\neq Z_{\left( \begin{smallmatrix}
0 & 0\\
0 & 1
\end{smallmatrix}\right) }$.
\end{example}

The above proposition gives a lot of information about how the multiplicity varies. With the aid of some calculations we can say more.

It is an easy exercise to see that every non-zero idempotent of $M_2^0(\mathbb{C})$ has the form \[\frac{1}{ad-bc}\begin{pmatrix}
ad & -ac\\
bd & -bc
\end{pmatrix}\]
and every non-zero nilpotent has the form
\[\lambda\begin{pmatrix}
ab & -a^2\\
b^2 & -ab\end{pmatrix}.\] Furthermore, their images in $PM_2^0(\mathbb{C})$ are $([a:b],[c:d])$ and $([a:b],[a:b])$, respectively.

A product of two idempotents in $M_2^0(\mathbb{C})$ is either a multiple of an idempotent by an element of $\mathbb{C}$ or a nilpotent.  If it is a multiple of an idempotent we would like to calculate this number.

\begin{lemma}\label{L:lambda}
\begin{enumerate}
\item Let $e,f,h\in E(S)$ and $\lambda\in\mathbb{C}$ with $ef=\lambda h$. If
\[\pi(e)=\left([a:b],[c:d]\right),
\pi(f)=\left([x:y],\, [z:w]\right)\text{and } \pi(h)=\left([a:b],[z:w]\right) \]
then 
\[\lambda=\frac{(aw-bz)(dx-cy)}{(ad-bc)(xw-yz)}.\]

\item Given $[a:b],[x:y],[z:w]\in \mathbb{C}P^1$, pairwise distinct, the map
\[[c:d]\mapsto \lambda=\frac{(aw-bz)(dx-cy)}{(ad-bc)(xw-yz)}\]
from $\mathbb{C}P^1\setminus \{[a:b],[x:y]\}$ to $\mathbb{C}^\times$
is bijective.
\end{enumerate}
\end{lemma}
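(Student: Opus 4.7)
For part (1), the plan is a direct matrix calculation using the explicit normal forms of idempotents recalled just before the lemma. I would choose the (unique) representatives
\[
e = \tfrac{1}{ad-bc}\begin{pmatrix} ad & -ac \\ bd & -bc \end{pmatrix}, \quad
f = \tfrac{1}{xw-yz}\begin{pmatrix} xw & -xz \\ yw & -yz \end{pmatrix}, \quad
h = \tfrac{1}{aw-bz}\begin{pmatrix} aw & -az \\ bw & -bz \end{pmatrix},
\]
which indeed square to themselves and project under $\pi$ to the prescribed points (note that pairwise distinctness of the three projective points guarantees that the denominators $ad-bc$, $xw-yz$, and $aw-bz$ are nonzero, so the formulas make sense). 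Then I would multiply $e\cdot f$ and observe that every entry of the resulting $2\times2$ matrix factors as $(dx - cy)$ times the corresponding entry of the matrix $\bigl(\begin{smallmatrix} aw & -az \\ bw & -bz \end{smallmatrix}\bigr)$. Reading off $\lambda$ so that $ef = \lambda h$ yields exactly the claimed expression.

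For part (2), the idea is to recognize the given map as a Möbius transformation in $[c:d]$ times a nonzero scalar. The scalar $\tfrac{aw-bz}{xw-yz}$ is a fixed element of $\mathbb{C}^\times$ because $[a:b],[x:y],[z:w]$ are pairwise distinct, so it suffices to handle the variable factor $[c:d] \mapsto \tfrac{dx-cy}{ad-bc}$. This is a linear-fractional map, and the matrix $\bigl(\begin{smallmatrix} -y & x \\ -b & a \end{smallmatrix}\bigr)$ defining it has determinant $bx - ay \neq 0$ since $[a:b]\neq[x:y]$, hence it is a bijection of $\mathbb{C}P^1$ onto $\mathbb{C}P^1$. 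It sends $[x:y]\mapsto 0$ and $[a:b]\mapsto\infty$, so its restriction to $\mathbb{C}P^1\setminus\{[a:b],[x:y]\}$ is a bijection onto $\mathbb{C}^\times$. Multiplying by the nonzero constant $\tfrac{aw-bz}{xw-yz}$ preserves this bijection onto $\mathbb{C}^\times$.

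I do not expect any real obstacle: part (1) is a bookkeeping calculation, and part (2) is an instance of the standard fact that a nondegenerate Möbius transformation is a bijection of the projective line. The only subtlety worth flagging is the verification that all the denominators one would like to invert are indeed nonzero, which in each case is precisely the pairwise-distinctness hypothesis on $[a:b],[x:y],[z:w]$.
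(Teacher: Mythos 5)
Your proof is correct. Part (1) is essentially the paper's own argument: the same explicit representatives for $e$, $f$, $h$, the same multiplication, and the same factorization of every entry of $ef$ as $(dx-cy)$ times the corresponding entry of $\bigl(\begin{smallmatrix} aw & -az \\ bw & -bz \end{smallmatrix}\bigr)$. (One small wording quibble: in part (1) the nonvanishing of $ad-bc$, $xw-yz$, $aw-bz$ comes from $e$, $f$, $h$ being nonzero idempotents, i.e.\ each has image distinct from its kernel, rather than from pairwise distinctness of three points, which is only hypothesized in part (2).) For part (2) you take a genuinely cleaner route than the paper: the paper proves injectivity by equating the two expressions for $\lambda$ and extracting $(ay-bx)(dc'-cd')=0$, and proves surjectivity by exhibiting the homogeneous linear equation in $(c,d)$ and noting it always has a projective solution; you instead factor out the constant $\tfrac{aw-bz}{xw-yz}$ and observe that the remaining map is the projective action of $\bigl(\begin{smallmatrix} -y & x \\ -b & a \end{smallmatrix}\bigr)$, whose determinant $bx-ay$ is nonzero, so it is an automorphism of $\mathbb{C}P^1$ carrying $\{[a:b],[x:y]\}$ onto $\{\infty,0\}$ and hence restricting to a bijection onto $\mathbb{C}^\times$. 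Your version buys a conceptual one-line proof of both injectivity and surjectivity at once (and incidentally sidesteps the typo in the paper's injectivity computation), while the paper's version is self-contained elementary algebra that does not invoke the Möbius-transformation fact. Both are complete.
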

\begin{proof}
\begin{enumerate}
\item Since
\[e=\frac{1}{ad-bc}\begin{pmatrix}
ad & -ac\\
bd & -bc
\end{pmatrix}\text{ and }
f=\frac{1}{xw-yz}\begin{pmatrix}
xw & -xz\\\
yw & -yz
\end{pmatrix}, \]
multiplication yields
\[ef=\frac{(aw-bz)(dx-cy)}{(ad-bc)(xw-yz)}\cdot
\frac{1}{aw-bz}\begin{pmatrix}
aw & -az\\
bw & -bz
\end{pmatrix}=\lambda h.\]
Indeed, $\lambda$ is independent of our choice of representatives for $$[a:b],[c:d],[x:y] \text{ and } [z:w].$$

\item  Injectivity: since $[a:b]\neq [x:y]$, $[c:d]=[c^\prime :d^\prime]$,
\[\frac{(aw-bz)(dx-cy)}{(ad-bc)(xw-yz)}=\frac{(aw-bz)(d^\prime x-c^\prime y)}{(ad^\prime-bc^\prime)(xw-yz)}\]
implies that
\[(ay-bx)(dc^\prime-c^\prime d)=0.\]

Surjectivity: given $\lambda\in\mathbb{C}^\times$, solving
$$\lambda=\frac{(aw-bz)(dx-cy)}{(ad-bc)(xw-yz)},$$
for $c,d$, is equivalent to solving the following homogeneous linear equation:
$$\left[\lambda b(xw-yz)-y(aw-bz)\right] c+\left[x(aw-bz)-a\lambda (xw-yz)\right] d=0.$$
This equation always has a (projective) solution.
\end{enumerate}
\end{proof}

\begin{definition}
Let $A\subseteq M_2^0(\mathbb{C})$ be a subset. If $Z_x=Z_y$ for every $x,y\in A\setminus \{0\}$ then we will say that $A$ has \emph{equal multiplicity} and we will denote its multiplicity by $Z_A$.
\end{definition}

\begin{proposition}\label{P:F^c>1 or =1}
Let $S$ be an $\infty$-constructible monoid with $\pi(S)=B_{F,G}$ (of type A). 
\begin{enumerate}
\item If $|F|>1$ and $|G|$ is infinite (or $|G|>1$ and $|F|$ is infinite) then $S$ has equal multiplicity $Z_S=\mathbb{C}^\times$;

\item If $|F|=1$ (or $|G|=1$) then a product of idempotents is an idempotent.
\end{enumerate}
\end{proposition}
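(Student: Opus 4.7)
The plan is to handle the two parts in turn, with Lemma \ref{L:lambda} providing the main computational input in each case.

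For part (1), by symmetry assume $|F|>1$ and $|G|$ infinite. As $|F|,|G|>1$, Proposition \ref{P:multiplicity}(1)--(2) furnishes a single algebraic subgroup $Z=Z_S\le\mathbb{C}^\times$ with $Z_x=Z$ for every nonzero $x\in S$. Algebraic subgroups of $\mathbb{C}^\times$ are either finite (cyclic, generated by a root of unity) or all of $\mathbb{C}^\times$, so it suffices to show $Z$ is infinite. I would fix distinct $[a:b],[x:y]\in F$ and some $[z:w]\in G\setminus\{[a:b],[x:y]\}$, available because $G$ is infinite. For each $[c:d]\in G\setminus\{[a:b],[x:y]\}$, Lemma \ref{L:basic-prop-mult}(2) combined with the uniqueness of an idempotent of $M_2^0(\mathbb{C})$ given its image and kernel produces the standard idempotents $e,f,h\in S$ with projections $([a:b],[c:d])$, $([x:y],[z:w])$ and $([a:b],[z:w])$. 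Lemma \ref{L:lambda}(1) yields $ef=\lambda h$ for the rational expression $\lambda$ stated there; since $ef\in S$ and $Z_h=Z$, this forces $\lambda\in Z$. By Lemma \ref{L:lambda}(2), as $[c:d]$ ranges over $\mathbb{C}P^1\setminus\{[a:b],[x:y]\}$ the scalar $\lambda$ sweeps bijectively through $\mathbb{C}^\times$; restricting to the infinite subset $G\setminus\{[a:b],[x:y]\}$ deposits infinitely many values inside $Z$, so $Z=\mathbb{C}^\times$.

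For part (2), by symmetry take $F=\{v_0\}=\{[a:b]\}$. Any $e,f\in E(S)$ have projections $(v_0,u),(v_0,u')$ with $u,u'\in G\setminus\{v_0\}$, and uniqueness of idempotents with a prescribed image identifies them with the standard matrices of Lemma \ref{L:lambda}. Applying Lemma \ref{L:lambda}(1) with $[x:y]=[a:b]$ collapses the rational expression for $\lambda$ to $1$, so $ef$ is the standard idempotent with projection $\pi(f)$, i.e.\ $f$ itself; in particular $ef$ is idempotent.

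The main obstacle is the scalar-counting argument in part (1): what makes it go through is the uniform equal-multiplicity conclusion of Proposition \ref{P:multiplicity}, which replaces the a priori distinct groups $Z_h$ by a single $Z$, so that the membership $ef\in S$ can be read uniformly as a statement $\lambda\in Z$ in one fixed subgroup of $\mathbb{C}^\times$. Without that uniformity the scalars produced for different $(e,f)$ would land in potentially different subgroups and no infinitude argument would be available.
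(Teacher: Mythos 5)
Your proof is correct and follows essentially the same route as the paper: in part (1) it fixes two points of $F$ and one of $G$, uses Lemma \ref{L:lambda} to inject $G\setminus\{[a:b],[x:y]\}$ into $Z_h$ via the scalars $\lambda$, and concludes $Z_h=\mathbb{C}^\times$ before spreading this to all of $S$ via Proposition \ref{P:multiplicity}; in part (2) it observes that $\lambda=1$ when the images coincide. (The only cosmetic difference is that the paper invokes the equal-multiplicity proposition after the counting rather than before; since all the scalars land in $Z_h$ for the single fixed $h$, either order works.)
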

\begin{proof}

\begin{enumerate}
\item Let $F=\{[a:b],[x:y],\ldots\}$ ($[a:b]\neq [x:y]$) and let \[[z:w]\in G\setminus \{[a:b],[x:y]\}.\] By Lemma \ref{L:basic-prop-mult} there exists $h\in E(S)$ with $\pi(h)=\left([a:b],[z:w]\right)$. 

The injectivity from Lemma \ref{L:lambda} implies that
\[|G\setminus \{ [a:b],[x:y] \}| \leq |Z_h|.\]
Since $G$ is infinite and $Z_h$ is an algebraic subgroup of $\mathbb{C}^\times$ (Lemma \ref{L:basic-prop-mult}(1)), necessarily $Z_h=\mathbb{C}^\times$. By Proposition \ref{P:multiplicity}, $Z_S=\mathbb{C}^\times.$

\item Let $e,f\in E(S)$ with 
\[\pi(e)=\left([a:b],[c:d]\right) \text { and }
\pi(f)=\left([a:b],[z:w]\right).\]
By Lemma \ref{L:lambda}, $ef=h$ where $h\in E(S)$ and $\pi(h)=\left([a:b],[z:w]\right)$.

\end{enumerate}
\end{proof}


There is an inherent problem with the nilpotents of a semigroup $S\subseteq M_2^0(\mathbb{C})$. For instance if $\pi(S)=B_{F,G}$ and we know the multiplicity of an idempotent we know all the multiples of idempotents lying in $S$. If $(v,u)\in B_{F,G}$ with $v\neq u$ then there exits $e\in S$ with $\pi(s)=(v,u)$ and it is uniquely defined by knowing $u$ and $v$. Since the multiplicities of all the idempotents are equal this gives us a complete description. The picture is different for nilpotents. For instance, for any $\lambda\in \mathbb{C}^\times$ the following is a semigroup \[\left \{ \mu\begin{pmatrix}
0 & \lambda\\
0 & 0
\end{pmatrix}:\mu^5=1\right \} \cup \{0\}\] and the multiplicity of $Z_{\left(\begin{smallmatrix}
0 & \lambda\\
0 & 0
\end{smallmatrix}\right)}$ does not depend on $\lambda$, furthermore they all have the exact same image under $\pi$, and thus not determined by $\lambda$. 

As a result of the above discussion we may set the following notations,
\begin{notation}
If $\pi(S)=B_{F,G}$ has no nilpotents we shall write $S=Z_SB_{F,G}$ and if $B_{F,G}$ has equal multiplicity $\mathbb{C}^\times$ we shall write $S=\mathbb{C}^\times B_{F,G}:=\pi^{-1}(B_{F,G}).$
\end{notation}

\begin{proposition}\label{P:M_2^0-type A}
Let $S$ be an $\infty$-constructible subsemigroup of $M^0_2(\mathbb{C})$ with $\pi(S)=B_{F,G}$ (of type A).
\begin{enumerate}
\item If $S$ has no nilpotents then it has equal multiplicity and \[S=Z_SB_{F,G}=\bigcup_{e\in S\text{ idempotent}} Z_S\cdot e.\] 
If $B_{F,G}$ is infinite  and $|F|,|G|>1$ then $Z_S=\mathbb{C}^\times$.
\item If $S$ has nilpotents then 
\begin{enumerate}
\item If $|F|>1$ and $|G|>1$ then $S$ has equal multiplicity, so \[S=\bigcup_{e\in S\text{ idempotent}} Z_S\cdot e\cup \bigcup_{n\in S\text{ nilpotent}} n.\] If $B_{F,G}$ is infinite then it has equal multiplicity $\mathbb{C}^\times$, so \[S=\pi^{-1}(B_{F,G})=\mathbb{C}^\times B_{F,G}.\]

\item If $|F|=1$ (or $|G|=1$) then $S$ if of the form
\[S=Z_eB_{\{v\},G\setminus\{v\}}\cup Z_n\cdot n,\] where $e$ is any idempotent and $n$ is any nilpotent.

\end{enumerate}
\end{enumerate}
If $S$ has no zero element, then only $(1)$ applies.

Furthermore, $S$ is an intersection of constructible semigroups.
\end{proposition}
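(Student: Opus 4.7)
The plan is to assemble the structural results already established---Lemma \ref{L:basic-prop-mult}, Proposition \ref{P:multiplicity}, and Proposition \ref{P:F^c>1 or =1}---into the claimed decompositions, and then realize $S$ as an intersection of definable semigroups by approximating $F$ and $G$ by their cofinite supersets.

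For case (1), every nonzero element of $M_2^0(\mathbb{C})$ is either nilpotent or a scalar multiple of an idempotent, so in the no-nilpotents case $S\setminus\{0\}$ is a union of scalar multiples of idempotents of $S$; Proposition \ref{P:multiplicity}(1) gives a common multiplicity $Z_S$ for all idempotents, yielding $S = \bigcup_{e \in E(S)} Z_S\cdot e$, and Proposition \ref{P:F^c>1 or =1}(1) upgrades this to $Z_S = \mathbb{C}^\times$ when $B_{F,G}$ is infinite with $|F|,|G|>1$. In case (2)(a), Proposition \ref{P:multiplicity}(2) extends the common multiplicity across the nilpotents as well, and the same upgrade applies in the infinite setting, giving $S = \pi^{-1}(B_{F,G})$. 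In case (2)(b), Proposition \ref{P:multiplicity}(3) collapses the nilpotents to a single orbit $Z_n\cdot n$ and supplies $Z_e \subseteq Z_n$, from which the decomposition $S = Z_eB_{F,G}\cup Z_n\cdot n$ drops out. If $S$ has no zero, any nilpotent $n\in S$ would give $n^2 = 0 \in S$, so only case (1) survives.

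For the intersection claim, fix the type. The multiplicity groups $Z_S$, $Z_e$, $Z_n$ are $\infty$-definable subsemigroups of $\mathbb{C}^\times$, hence algebraic subgroups by Lemma \ref{L:basic-prop-mult}(1), and in particular definable. Using Proposition \ref{P:first}, write $F = \bigcap_i F_i$ and $G = \bigcap_j G_j$ with each $F_i, G_j$ cofinite, and set
\[S_{i,j} \;=\; \pi^{-1}(B_{F_i,G_j}) \qquad\text{or}\qquad Z_eB_{F_i,G_j}\cup Z_n\cdot n\]
according to the structural case. One then checks $S = \bigcap_{i,j} S_{i,j}$, noting that in the finite cases (where $B_{F,G}$ is itself finite) one simply takes $S$ as the lone member of the intersection.

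The main obstacle is verifying that each $S_{i,j}$ really is a semigroup. For the $\pi^{-1}$ form this is automatic. For the $|F|=1$ form, one substitutes $[x:y]=[a:b]$ into the formula of Lemma \ref{L:lambda}(1); a direct cancellation gives $\lambda = 1$ for every idempotent--idempotent product in $B_{F_i,G_j}$, so $Z_eB_{F_i,G_j}$ is closed under multiplication for any algebraic $Z_e$, and the mixed idempotent--nilpotent products are absorbed into $Z_n\cdot n\cup\{0\}$ by the ideal property from Proposition \ref{P:multiplicity}(3). The $|G|=1$ case is symmetric. I expect this bookkeeping---rather than any new idea---to be the only real effort in the argument.
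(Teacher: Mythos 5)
Your structural decompositions in (1), (2)(a), (2)(b) and the no-zero remark are assembled correctly from Lemma \ref{L:basic-prop-mult} and Propositions \ref{P:multiplicity} and \ref{P:F^c>1 or =1}, and match the paper's argument. The gap is in the "intersection of definable semigroups" part, specifically where you assert that $Z_n$ is an $\infty$-definable sub\emph{semigroup} of $\mathbb{C}^\times$ and hence an algebraic subgroup, "in particular definable," by Lemma \ref{L:basic-prop-mult}(1). That lemma applies only to idempotents. For a nilpotent $n$ we have $n^2=0$, so closure of $S$ under multiplication imposes no multiplicativity on $Z_n$; the only constraint (coming from $en=n$) is $Z_e\cdot Z_n\subseteq Z_n$, i.e.\ $Z_n$ is a union of cosets of the group $Z_e$. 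When $Z_e$ is finite, $Z_n$ can be an arbitrary $\infty$-definable union of such cosets --- for instance $S=\{e,0\}\cup Z_n\cdot n$ with $Z_n=\bigcap_i A_i$ a non-definable intersection of cofinite sets is a perfectly good $\infty$-definable semigroup. Consequently your intersectants $Z_eB_{F_i,G_j}\cup Z_n\cdot n$ need not be definable sets, and the conclusion does not follow as written. The repair is the one the paper uses: also approximate $Z_n=\bigcap_i A_i$ by definable $A_i\supseteq Z_n$, take intersectants of the form $M_i\cup A_i\cdot n$, and verify each is a semigroup using $ean=an$, $ane=0$, $n^2=0$.

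A second, smaller point: in the $|F|=1$ case you write $F=\bigcap_i F_i$ with $F_i$ cofinite, but your own closure argument (substituting $[x:y]=[a:b]$ in Lemma \ref{L:lambda} to get $\lambda=1$) only works when both idempotents have the \emph{same} image, i.e.\ when the first index set is the singleton $\{v\}$. If you enlarge $\{v\}$ to a cofinite $F_i$, products of idempotents in $Z_eB_{F_i,G_j}$ can pick up nontrivial scalars and the set is no longer closed under multiplication unless $Z_e=\mathbb{C}^\times$. Since $\{v\}$ is finite and hence already definable, you should keep it fixed and only approximate $G$, as the paper does with $B_{\{v\},G}=\bigcap_i B_{\{v\},U_i}$.
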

\begin{proof}

\begin{enumerate}
\item By Proposition \ref{P:multiplicity}, $S$ has equal multiplicity $Z_S$.  If $B_{F,G}$ is infinite and $|F|,|G|>1$, then by Proposition \ref{P:F^c>1 or =1} $Z_S=\mathbb{C}^\times$.

If $B_{F,G}$ is finite then either $S$ is finite (and hence constructible) or $S=\pi^{-1}(B_{F,G})$. If $B_{F,G}$ is infinite then by Proposition \ref{P:first}, $B_{F,G}=\bigcap_i B_{V_i,U_i}$ with $B_{V_i,U_i}$ constructible. If $|F|,|G|>1$ then \[S=\bigcap_i \pi^{-1}(B_{V_i,U_i}).\] 
Note that $\pi^{-1}(B_{V_i,U_i})$ is constructible by Lemma \ref{L:gen-of-chev}.
On the other hand, if $F=\{v\}$ write $B_{\{v\},G}=\bigcap_i B_{\{v\},U_i}$, with $B_{\{v\},U_i}$ constructible. By Proposition \ref{P:F^c>1 or =1} a product of idempotents in $Z_SB_{\{v\},U_i}$ is still an idempotent and hence a semigroup so \[S=\bigcap_iZ_SB_{\{v\},U_i}.\] 
Note that since $Z_S$ is a constructible subset of $\mathbb{C}$, \[Z_SB_{\{v\},U_i}=Z_S\cdot \left( \pi^{-1}(B_{\{v\},U_i})\cap \{x:x^2=x\}\right)\] is also constructible.
\item \begin{enumerate}
\item By Proposition \ref{P:multiplicity}, $S$ has equal multiplicity and if $B_{F,G}$ is infinite then $Z_S=\mathbb{C}^\times$ by Proposition \ref{P:F^c>1 or =1}.

The proof is as is in $(1)$, even simpler because $|F|,|G|>1$.

\item By Proposition \ref{P:multiplicity} there is only one nilpotent (up to multiplicity).

Denote by $N:=Z_n\cdot n$ the ideal of nilpotents (Proposition \ref{P:multiplicity}). Since a product of idempotents is an idempotent (again Proposition \ref{P:multiplicity}), $S\setminus N$ is either empty or a subsemigroup. By $(1)$ there exist constructible semigroups such that \[S\setminus N=\bigcap_i M_i.\]
Since $Z_n$ is $\infty$-constructible, $N=\bigcap_i A_i\cdot n$ where the $A_i$ are constructible subsets of $\mathbb{C}^\times$. Since $M_j\cap (A_i\cdot n)=\emptyset$, \[S=\bigcap_i (M_i\cup A_i\cdot n).\] To show that the intersectants are semigroups, observe that the $\pi(M_i)$ are of the form $B_{\{v\},U_i}$ and hence either by Proposition \ref{P:multiplicity} or by direct calculation, for every idempotent $e\in M_i$ and $an\in A_i\cdot n$, \[ean=an\text{ and } ane=0,\] so it is indeed an intersection of constructible semigroups.
\end{enumerate}
\end{enumerate}
\end{proof}

Before we handle semigroups $S$ with $\pi(S)$ of type B, observe the following easy lemma:

\begin{lemma}\label{L:cal}
Let $S$ be a subsemigroup of $M_2^0(\mathbb{C})$ and $e,f\in E(S)$. If $\pi(e)=([a:b],[c:d])$ and $\pi(f)=([x:y],[a:b])$ with $[c:d]\neq [x:y]$ then 
\[ef=\frac{(cy-dx)}{(ad-bc)(ay-bx)}\begin{pmatrix}
ab & -a^2\\
b^2 & -ad
\end{pmatrix}.\]
\end{lemma}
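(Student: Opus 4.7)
The proof is a direct computation, so my plan is essentially to multiply two explicit matrices and factor the result. Using the standard forms for non-zero idempotents of $M_2^0(\mathbb{C})$ with prescribed image and kernel recorded just before Lemma~\ref{L:lambda}, I would write
\[e=\frac{1}{ad-bc}\begin{pmatrix} ad & -ac \\ bd & -bc \end{pmatrix},\qquad f=\frac{1}{xb-ya}\begin{pmatrix} xb & -xa \\ yb & -ya \end{pmatrix},\]
which are well-defined because $[a{:}b]\neq[c{:}d]$ and $[x{:}y]\neq[a{:}b]$ (both implicit: otherwise $e$ or $f$ would not be a non-zero idempotent, as they have distinct image and kernel).

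Next, I would carry out the $2\times 2$ matrix multiplication entry by entry. Each of the four entries of the product will factor as a scalar multiple of one of $ab$, $-a^{2}$, $b^{2}$, $-ab$, with the same scalar $(dx-cy)$ appearing in all four; for instance the $(1,1)$-entry is $ad\cdot xb+(-ac)\cdot yb=ab(dx-cy)$, and similarly for the others. Pulling that common factor out together with the overall normalising constant $\tfrac{1}{(ad-bc)(xb-ya)}$ gives
\[ef=\frac{dx-cy}{(ad-bc)(xb-ya)}\begin{pmatrix} ab & -a^{2} \\ b^{2} & -ab \end{pmatrix},\]
which, after multiplying numerator and denominator by $-1$, matches the formula in the statement (the $-ad$ appearing in the lower-right entry of the displayed matrix of the lemma is a typographical slip for $-ab$, as forced by the fact that $ef$ is nilpotent with image and kernel $[a{:}b]$, hence must have zero trace).

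Finally, I would sanity-check consistency with the earlier material: the hypothesis $[c{:}d]\neq[x{:}y]$ ensures that $\pi(e)\pi(f)=([a{:}b],[a{:}b])$ under the multiplication rule for $PM_2^{0}(\mathbb{C})$, so $ef$ must be a non-zero nilpotent whose image in $PM_2^0$ is $([a{:}b],[a{:}b])$. This is exactly the shape of the matrix on the right-hand side, which confirms the calculation. There is no real obstacle here; the only thing to be careful with is bookkeeping of signs in the denominator $(ad-bc)(ay-bx) = -(ad-bc)(xb-ya)$ and matching it against the sign flip in the numerator $(cy-dx)=-(dx-cy)$.
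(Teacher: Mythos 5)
Your computation is correct and is exactly the direct matrix multiplication that the paper leaves implicit (no proof is printed for this lemma): substituting the standard idempotent forms, multiplying, and extracting the common factor $(dx-cy)$ gives precisely the stated scalar after the sign flips in numerator and denominator. You are also right that the $-ad$ in the lower-right entry of the displayed matrix is a typographical slip for $-ab$, since $ef$ is nilpotent with image and kernel $[a{:}b]$ and must have zero trace.
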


\begin{proposition}\label{P:M_2^0-type B}
Let $S$ be an $\infty$-constructible subsemigroup of $M^0_2(\mathbb{C})$ with $\pi(S)=B_{F,\{v\}}\cup B_{\{v\},G}$ (of type B).
Then \[S=Z_eB_{\{v\},G\setminus \{v\}}\cup Z_f B_{F\setminus \{v\},\{v\}}\cup Z_n\cdot n,\] where $e$ and $f$ are idempotents with $\pi(e)\in B_{\{v\},G}$ and $\pi(f)\in B_{F,\{v\}}$ and $n$ is any nilpotent.

Furthermore, $S$ is an intersection of constructible semigroups.
\end{proposition}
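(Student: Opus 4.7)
The plan has two phases: first pin down the structural decomposition by controlling multiplicities branch by branch, and then realise $S$ as an intersection of definable semigroups in the spirit of Proposition~\ref{P:M_2^0-type A}(2)(b), with one extra compactness step to reconcile cross-branch products.

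For the structural part I would first observe that any two idempotents $e_1,e_2\in E(S)$ with $\pi(e_1),\pi(e_2)\in B_{\{v\},G\setminus\{v\}}$ share the kernel coordinate $v$, so the scalar produced by Lemma~\ref{L:lambda}(1) is $1$ and hence $e_2e_1=e_2$. Consequently $\lambda\in Z_{e_1}$ forces $\lambda e_2=e_2\cdot(\lambda e_1)\in S$, and by symmetry the common multiplicity $Z_e:=Z_{e_1}=Z_{e_2}$ is well defined; the same reasoning on the other branch (via the dual identity $f_1f_2=f_2$) produces a single $Z_f$. Since the only diagonal point $(w,w)$ appearing in $B_{F,\{v\}}\cup B_{\{v\},G}$ is $(v,v)$, every nonzero nilpotent in $S$ is a scalar multiple of a fixed $n$ with scalar set $Z_n$. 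Combined with the fact that every nonzero singular $2\times2$ matrix is a scalar multiple of an idempotent or of a nilpotent, this already yields the displayed decomposition.

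For the $\infty$-definable assertion I would set $S^{(1)}:=Z_fB_{F\setminus\{v\},\{v\}}\cup\{0\}$ and $S^{(2)}:=Z_eB_{\{v\},G\setminus\{v\}}\cup\{0\}$; each is a subsemigroup of $S$ whose $\pi$-image is of type A with disjoint coordinate sets, hence without nilpotents. Applying Proposition~\ref{P:M_2^0-type A}(1) to each yields $S^{(1)}=\bigcap_iM_i^{(1)}$ with $M_i^{(1)}=Z_fB_{F_i,\{v\}}\cup\{0\}$ and $S^{(2)}=\bigcap_jM_j^{(2)}$ with $M_j^{(2)}=Z_eB_{\{v\},G_j}\cup\{0\}$, for cofinite $F_i,G_j\subseteq\mathbb{C}P^1\setminus\{v\}$. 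Write $Z_n=\bigcap_kA_k$ with $A_k\subseteq\mathbb{C}^\times$ definable; one checks $Z_fZ_e\subseteq Z_n$ and that $Z_n$ is $Z_fZ_e$-invariant, so by replacing each $A_k$ with the preimage in $\mathbb{C}^\times$ of the image of $A_k$ in $\mathbb{C}^\times/Z_fZ_e$ we may assume every $A_k$ is invariant under left multiplication by $Z_f$ and right multiplication by $Z_e$ while still $\bigcap_kA_k=Z_n$. The natural intersectants
\[S_{i,j,k}:=M_i^{(1)}\cup M_j^{(2)}\cup A_k\cdot n\]
have intersection $S$ over the family described next.

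The final issue is to ensure each $S_{i,j,k}$ is a semigroup. Inspecting the multiplication table, the only nontrivial product not internal to one of the three pieces is the cross-branch one $fe=\lambda_0(u_1,u_2)\,n$ coming from Lemma~\ref{L:cal}; after the invariance arrangement above, the sole remaining requirement is $\lambda_0(u_1,u_2)\in A_k$ for all $u_1\in F_i$, $u_2\in G_j$ with $u_1\neq u_2$. The set $B_k=\{(u_1,u_2):u_1\neq u_2,\,\lambda_0(u_1,u_2)\notin A_k\}$ is definable (as $\lambda_0$ is rational) and, by the structural part, disjoint from $(F\setminus\{v\})\times(G\setminus\{v\})\setminus\Delta=\bigcap_{i,j}((F_i\times G_j)\setminus\Delta)$; a standard compactness argument in the $\omega$-stable theory ACF$_0$ then provides, for each $k$, cofinite $F_{i(k)},G_{j(k)}$ with $B_k\cap(F_{i(k)}\times G_{j(k)})=\emptyset$. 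Adjoining also all triples of the form $(i,j,k^*)$ with $A_{k^*}=\mathbb{C}^\times$ preserves cofinality in $i$ and $j$, so intersecting these definable semigroups recovers $S$. The main obstacle is precisely this cross-branch compatibility, which has no counterpart in the type A case and is what forces the compactness step.
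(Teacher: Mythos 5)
Your overall strategy is the same as the paper's: split $S$ into the two idempotent branches plus the single nilpotent class, get equal multiplicity within each branch from Lemma \ref{L:lambda} (the shared coordinate forces $\lambda=1$), apply Proposition \ref{P:M_2^0-type A}(1) to each branch separately, and then fix up the cross-branch products $fe\in Z_n\cdot n$ so that each intersectant is closed under multiplication. The paper handles that last point by enlarging the $A_i$ to contain the (definable, by Lemma \ref{L:cal}) set of multiplicities arising from $L_i\cdot R_i$; you instead shrink $F_i\times G_j$ away from the bad set $B_k$ by compactness. These are two equivalent ways of making the same compactness move, and your accounting of the multiplication table is correct.

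There is, however, one step that fails as written: replacing each $A_k$ by the preimage of its image in $\mathbb{C}^\times/Z_fZ_e$, i.e.\ by $A_kZ_eZ_f$, \emph{enlarges} the sets, and $\bigcap_k\left(A_kZ_eZ_f\right)$ can be strictly larger than $\left(\bigcap_kA_k\right)Z_eZ_f=Z_n$ (for instance with $Z_eZ_f=\{\pm1\}$ and $A_k$ omitting $c_1,\dots,c_k$ but not $-c_1,\dots,-c_k$, every saturation is all of $\mathbb{C}^\times$). The invariance you need must be obtained by \emph{shrinking}: either replace $A_k$ by the definable core $\{x\in\mathbb{C}^\times:xZ_eZ_f\subseteq A_k\}$, which is still definable, still contains $Z_n$ (since $Z_eZ_nZ_f=Z_n$), and is contained in $A_k$; or note, as the paper does for the $H$-action in Theorem \ref{T:M_2-type A}, that $\mathbb{C}^\times\setminus Z_n$ is a union of $Z_eZ_f$-orbits, so the $A_k$ can be chosen invariant from the outset. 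With that one-line repair your argument goes through. (A cosmetic point: whether $e_2e_1$ equals $e_1$ or $e_2$ depends on which coordinate the branch fixes, so in one branch you must multiply on the other side to propagate $Z_{e_1}\subseteq Z_{e_2}$; you note the dual identity, so this is fine.)
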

\begin{proof}
There is only one nilpotent up to multiplicity so the structure follows from similar arguments as in Proposition \ref{P:M_2^0-type A}.

By Proposition \ref{P:M_2^0-type A}, there exist definable semigroups $L_i$ and $R_i$ such that \[Z_eB_{\{v\},G\setminus \{v\}}=\bigcap_i L_i\text{ and}\]  \[Z_f B_{F\setminus \{v\},\{v\}}=\bigcap_i R_i.\] 
We may obviously choose the $L_i$ and $R_i$ to be such that for every $x\in L_i$ and $y\in R_i$, $xy$ is nilpotent and up to multiplicity the same nilpotent as in $S$. 

\begin{claim}
The set of multiplicities of the nilpotent we get when multiplying an element from $L_i$ with an element from $R_i$ is constructible.
\end{claim}
\begin{claimproof}
One can either use Lemma \ref{L:cal}, or the fact that definable sets correspond to constructible sets (Fact \ref{F:QE}). Another approach, which is similar to the latter, is to look at the subset $\{(x,y,z):xy=zn\}$ of $L_i\times R_i\times \mathbb{C}$, where $n$ is any one of the nilpotents of $S$. It is obviously a constructible subset. Taking the projection on the last coordinate and using Lemma \ref{L:gen-of-chev}, we get our result.
\end{claimproof}

We may thus choose $Z_n\cdot n=\bigcap_i A_i\cdot n$ with $A_i$ constructible and containing the different multiplicities we get from these products. 

Since $R_i, L_i$ and $A_i\cdot n$ are pairwise disjoint (we may choose the $R_i$ and $L_j$ not to have nilpotents and hence they are disjoint)
\[S=\bigcap_i(L_i\cup R_i\cup A_i\cdot n).\]
Each intersectant is a semigroup since if $e\in L_i$ and $f\in R_i$ then direct calculation (or Proposition \ref{P:multiplicity}) shows that
\[fe=0,\, ne=0,\, fn=0,\, en=n,\, nf=n\]
and $ef\in A_i\cdot n$ by the choice of the $A_i$.
\end{proof}

\subsection{Submonoids of $M_2(\mathbb{C})$}
First, this easy lemma:
\begin{lemma}\label{L:easy lemma}
Let $H\subseteq GL_2(\mathbb{C})$ be a subgroup and $S\subseteq M^0_2(\mathbb{C})$ a subsemigroup.
If $\pi(H\cup S)$ is a monoid and $S$ has equal multiplicity $\mathbb{C}^\times$ then $H\cup S$ is a monoid.  
\end{lemma}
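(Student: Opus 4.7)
The plan is to verify directly that $H \cup S$ is closed under the matrix product, handling the four possible types of products case by case. Three of the four cases are immediate from the hypotheses: $H \cdot H \subseteq H$ because $H$ is already a group, and $S \cdot S \subseteq S$ because $S$ is already a semigroup. So the only content is in the two mixed products $H \cdot S$ and $S \cdot H$, which by symmetry are handled in the same way.

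For the mixed case, take $h \in H$ and $s \in S$. Because $h$ is invertible and $s$ is singular, $hs$ is singular, so $hs \in M_2^0(\mathbb{C})$; in particular $hs$ does \emph{not} land in $H$, and the goal reduces to showing $hs \in S$. Applying $\pi$ and using that $\pi(H \cup S)$ is assumed to be a monoid, we get
\[
\pi(hs) = \pi(h)\pi(s) \in \pi(H) \cup \pi(S).
\]
Since $\pi(hs)$ is the image of a singular matrix it cannot lie in $\pi(H) \subseteq PGL_2(\mathbb{C})$, hence $\pi(hs) \in \pi(S)$.

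Now I invoke the equal multiplicity hypothesis. Pick any $s' \in S$ with $\pi(s') = \pi(hs)$; then $hs = \lambda s'$ for some $\lambda \in \mathbb{C}^\times$. Since $Z_{s'} = \mathbb{C}^\times$, we have $\lambda s' \in S$, that is $hs \in S$. The symmetric argument gives $sh \in S$. Combining the four cases, $H \cup S$ is closed under multiplication, hence a submonoid of $M_2(\mathbb{C})$ (the identity is in $H$).

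There is essentially no obstacle here: the whole content of the lemma is the bookkeeping observation that equal multiplicity $\mathbb{C}^\times$ is exactly the statement $S = \pi^{-1}(\pi(S)) \cap M_2^0(\mathbb{C})$, which is precisely what is needed to lift the closure of $\pi(H\cup S)$ under multiplication back up to $H \cup S$.
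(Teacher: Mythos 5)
Your proof is correct and is essentially the paper's argument: the only content is the mixed product $hs$ (and $sh$), which lands in $S$ because $\pi(hs)\in\pi(S)$ and the multiplicity $\mathbb{C}^\times$ lets you absorb the scalar. The paper phrases this as "$\lambda hs\in S$ for some $\lambda$, and $\lambda^{-1}\in Z_{\lambda hs}$," which is the same step.
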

\begin{proof}
Let $h\in H$ and $s\in S$. Since $\pi(H\cup S)$ is a monoid, there exists $\lambda\in\mathbb{C}^\times$ such that $\lambda hs\in S$. By assumption, $\lambda^{-1}\in Z_{\lambda hs}$ so $hs\in S$.
\end{proof}

Let $M=H\cup S$ be an $\infty$-constructibl submonoid of $M_2(\mathbb{C})$, where $H$ and $S$ are the regular and singular parts, respectively. By the previous sections, $H$ is an algebraic group and $S$ is an intersection of definable semigroups. Furthermore, we have $\pi(M)=\pi(H)\cup \pi(S)$, where $\pi(H)$ is constructible and hence an algebraic subgroup of $PGL_2(\mathbb{C})$.

\begin{remark}
Algebraic subgroups of $GL_2(\mathbb{C})$ were treated in \cite{Nguyen}.
\end{remark}

\begin{lemma}\label{L:H-acting-on-nilpotent}
Let $H$ be an infinite algebraic subgroup of $GL_2(\mathbb{C})$ and $n\in M_2(\mathbb{C})$ such that for every $h\in H$ there exists $\lambda_h\in \mathbb{C}^\times$ such that $hn=\lambda_hn$. Then \[\{\lambda_h :h\in H\}=\mathbb{C}^\times.\]
\end{lemma}
\begin{proof}
Since $G=\{\lambda_h:h\in H\}$ is an infinite constructible subgroup of $\mathbb{C}^\times$, by Fact \ref{F:subconstgrp-of-alg} it must be all of $\mathbb{C}^\times$.
\end{proof}

\begin{proposition}\label{P:M_2-type A}
Let $M=H\cup S$ be an $\infty$-constructible submonoid of $M_2(\mathbb{C})$ with $\pi(S)=B_{F,G}$ (of type A). $M$ is an intersection of constructible monoids. Moreover, in the following cases $M$ is necessarily constructible:
\begin{list}{•}{}
\item $\pi(H)$ is infinite.
\item $B_{F,G}$ does not have exactly one nilpotent and $S$ does not have equal multiplicity $\mathbb{C}^\times$.
\end{list}

\end{proposition}
\begin{proof}
We use the characterization of $S$ given in Proposition \ref{P:M_2^0-type A} and break the proof into distinct cases:
\begin{enumerate}
\item Assume $S$ has equal multiplicity $\mathbb{C}^\times$, thus $M=H\cup \mathbb{C}^\times B_{F,G}$. If $B_{F,G}$ is constructible, for instance if $\pi(H)$ is infinite (Proposition \ref{P:submonoid of PM2}), then $M$ is constructible. Otherwise, as was done in Proposition \ref{P:submonoid of PM2} we may write \[\pi(M)=\bigcap_i (\pi(H)\cup B_{U_i,V_i}),\] where the $U_i, V_i$ are co-finite and $\pi(H)$-invariant. Thus \[M=\bigcap_i (H\cup \mathbb{C}^\times B_{U_i,V_i})\] and the intersectants are monoids by Lemma \ref{L:easy lemma}.

\item Assume $S$ does not have equal multiplicity $\mathbb{C}^\times$.
\begin{enumerate}
\item Assume $B_{F,G}$ does not have exactly one non-zero nilpotent (i.e. $S$ does not have exactly one non-zero nilpotent, upto multiplicity), thus necessarily $B_{F,G}$ is finite, $S$ has equal multiplicity and $E(S)\neq \emptyset$. Since the multiplicity of any idempotent is an algebraic group, $M=H\cup S$ is constructible.
\item Otherwise, $S$ is of the form \[S=Z_eB_{\{v\},G\setminus \{v\}}\cup Z_n\cdot n,\] where $e$ is an idempotent and $n$ is a nilpotent. Since $hn\in Z_n\cdot n$ for $h\in H$, $H$ acts on $Z_n$ by sending $z$ to $z_h$ where $z_h\cdot n=h\cdot (zn)$. If $B_{\{v\},G\setminus\{v\}}$ is constructible and $H$ is infinite (for instance if $\pi(H)$ is infinite), then by Lemma \ref{L:H-acting-on-nilpotent}, $Z_n=\mathbb{C}^\times$, so $M=H\cup S$ is constructible.

Either way, we may write $Z_n=\bigcap_i A_i$, where the $A_i$ are constructible and $H$-invariant. Indeed, since $Z_n$ is an $\infty$-constructible subset of $\mathbb{C}$, by Lemma \ref{L:cons-in-curve} it is either finite or co-countable. If $\mathbb{C}^\times \setminus Z_n$ is countable it is a union of countable many $H$-orbits and each of them is constructible. Since \[\pi(H)\cup B_{\{v\},G\setminus \{v\}}=\bigcap_i (\pi(H)\cup B_{\{v\},U_i}),\] where $v\notin U_i$ are co-finite and $\pi(H)$-invariant, \[M=\bigcap_i (H\cup Z_eB_{\{v\},U_i}\cup A_i\cdot n).\]
Using the final arguments in the proof of Proposition \ref{P:M_2^0-type A} and the choice of the $A_i$, in order to verify that the intersectants are monoids we only need to verify that $Z_eB_{\{v\},U_i}$ is $H$-invariant. This follows by a similar argument to the one that was given in Lemma \ref{L:easy lemma}.
\end{enumerate}
\end{enumerate}
\end{proof}

The following is an example of an $\infty$-constructible semigroup which is not constructible.
\begin{example}
Let $Z$ be an co-countable subset of $\mathbb{C}$ and $n$ any non-zero nilpotent of $M_2(\mathbb{C})$. Since $Z=\bigcap_{a\notin Z} \mathbb{C}\setminus \{a\}$, $Z$ is $\infty$-constructible, but not constructible. As a result, the semigroup $S=Z\cdot n\cup\{0\}$ is not constructible.
\end{example}

\begin{proposition}\label{P:M_2-type B}
Let $M=H\cup S$ be an $\infty$-constructible submonoid of $M_2(\mathbb{C})$ with $\pi(S)=B_{F,\{v\}}\cup B_{\{v\},G}$ (of type B). $M$ is an intersection of constructible monoids. Moreover, if $\pi(H)$ is infinite then $M$ is constructible.
\end{proposition}
\begin{proof}
Following Proposition \ref{P:M_2^0-type B}, \[S=Z_eB_{\{v\},G\setminus \{v\}}\cup Z_fB_{F\setminus\{v\},\{v\}}\cup Z_n\cdot n.\] Using Proposition \ref{P:M_2-type A}, we may write 
\[Z_eB_{\{v\},G\setminus \{v\}}=\bigcap_i L_i \text{ and}\] 
\[Z_fB_{F\setminus\{v\},\{v\}}=\bigcap_i R_i\] where the $L_i$ and $R_i$ are $H$-invariant. As in Proposition \ref{P:M_2-type A}, $H$ acts on $Z_n$. Write $Z_n=\bigcap_i A_i$ where the $A_i$ are $H$-invariant and contain the different multiplicities of $n$ we get when multiplying $xy$ for $x\in L_i$ and $y\in R_i$ (see the proof of Proposition \ref{P:M_2^0-type B}).
Thus \[M=\bigcap_i (H\cup L_i\cup R_i \cup A_i\cdot n).\] Similarly to the argument in the proof Proposition \ref{P:M_2^0-type B}, and by choice, the intersectants are monoids.

As in Proposition \ref{P:M_2-type A}, if $\pi(H)$ is infinite, $M=H\cup S$ is constructible.
\end{proof}

\paragraph*{Acknowledgements}
I would like to thank my PhD adviser, Ehud Hrushovski for his ideas and discussions leading to and during this paper.

\bibliographystyle{plain}
\bibliography{M2(C)}

\end{document}